\newcommand{\Rset}{\mathbb{R}}
\newcommand{\F}{\mathcal{F}}
\begin{document}

\title{SELFDECOMPOSABLE LAWS ASSOCIATED WITH HYPERBOLIC FUNCTIONS}

\author{Zbigniew J. Jurek and Marc Yor}

\date{\it{Probab. Math. Stat.} vol. \bf{24.1} (2004)}

\maketitle

\newtheorem{prop}{PROPOSITION}
\newtheorem{cor}{COROLLARY}

\theoremstyle{remark}
\newtheorem{rem}{REMARK}

\begin{quote}
\noindent {\footnotesize \textbf{ABSTRACT.}  It is shown that the
hyperbolic functions can be associated with selfdecomposable
distributions (in short: SD probability distributions or L\'evy
class L probability laws). Consequently, they admit associated
background driving L\'evy processes $Y$ (BDLP $Y$). We interpret
the distributions of $Y(1)$ via Bessel squared processes, Bessel
bridges and local times.}

\medskip
MSC\,2000 \emph{subject classifications.} Primary 60E07, 60E10 ;
secondary 60B11, 60G51.

\medskip
\emph{Key words and phrases:} Hyperbolic characteristic functions;
class L or SD probability distribution; selfdecomposability;
L\'evy process; Bessel squared process; Bessel bridge; local
times.

\end{quote}

\textbf{1. Introduction and terminology.} The aim of this note is
to provide a new way of looking at the hyperbolic functions:
\emph{cosh, sinh} and \emph{tanh}, or their modifications, as the
members of the class SD of selfdecomposable characteristic
functions (often called class L, after Paul L\'evy). Analytically
one says that \emph{a characteristic function  $\phi$ is
selfdecomposable}, we simply write $ \phi \in SD$,if

\begin{equation}
\forall(0<c<1)\, \exists \,\rho_{c}\, \forall(t \in \Rset) \ \
\phi(t)= \phi(ct) \rho_{c}(t),
\end{equation}
where $ \rho_c$ is also a characteristic function. Let us recall
here that class $SD$ is a proper subset of $ID$, the class of all
\emph{infinitely divisible characteristic functions}, and that the
factors $\rho_{c}$ in (1) are in $ID$ as well ; cf. Jurek and
Mason (1993), Section 3.9., or Lo\'eve (1963), Section 23 (there
this class is denoted by $\mathfrak{N}$). We also will use the
convention that a random variable $X$ (in short: r.v. $X$) or its
probability distribution $\mu_{X}$ or its probability density
$f_{X}$ is selfdecomposable if the corresponding characteristic
function is in the class $SD$. Furthermore, the equation (1)
describing the selfdecomposability property, in terms of a r.v.
$X$ means that
\[
X \in SD \quad \mbox{iff} \quad \forall(0<c<1) \exists(r.v.\,\,
X_c) \quad X \stackrel{d}{=} cX+X_{c},
\]
where the r.v. $X$ and $X_c$ are independent and $\stackrel{d}{=}$
means equality in distribution.

\medskip

For further references let us recall the main properties of the
selfdecomposable distributions (or characteristic functions or
r.v's):

\begin{description}
\item[(a)] $SD$ \emph{with the convolution and the weak convergence forms a
closed
convolution subsemigroup of} $ID$;

\item[(b)] $SD$ \emph{is closed under affine mappings, i.e., for all reals a
and b one has}: $\phi \in SD$ iff  $e^{ibt}\phi(at) \in SD$.

\item[(c)] $X \in SD$ \emph{iff there exists  a (unique) L\'evy
process
$Y(\cdot)$ such that} \\
$X \stackrel{d}{=} \int_{0}^{\infty}e^{-s}dY(s)$, \emph{where Y is
called the BDLP (background driving \\
L\'evy process) of  the X. Moreover, one has that}
$\mathbb{E}[\log(1+|Y(1)|)] < \infty$.

[$ID_{\log}$ will stand for the class of all infinitely divisible
laws with finite logarithmic moments.]

\item[(d)] \emph{Let $\phi$ and $\psi$ denote the characteristic function of
$X$ and $Y(1)$, respectively, in part (c). Then one has} \\
$\log \phi(t) = \int_{0}^{t} \log \psi(v) \frac{dv}{v}$, i.e.,
$\psi(t)= \exp[t(\log \phi(t))^{'}],t \neq 0, \psi(0)=1$.

\item[(e)] \emph{Let M be the L\'evy spectral measure in the L\'evy-Khintchine
formula of $\phi \in SD$. Then M has a density $h(x)$ such that $xh(x)$
is non-increasing on the positive and negative half-lines. \\
Furthermore, if $h$ is differentiable almost everywhere then
$dN(x) = -(xh(x)^{'}dx$ is the L\'evy spectral measure of $\psi$ in (d). \\
Finally, one has also the following logarithmic moment
condition }\\
$\int_{\{|x| \ge \epsilon\}} \log(1+|x|)dN(x) < \infty$,\quad
\emph{for all positive $\epsilon$}.
\end{description}

Parts \textbf{(a)} and \textbf{(b)} follow directly from (1). For
\textbf{(c)} and \textbf{(d)} cf. Jurek \& Mason (1993), Theorem
3.6.8 and Remark 3.6.9(4). Part \textbf{(e)} is Corollary 1.1 from
Jurek (1997).

In this note we will characterize the BDLP's (or the
characteristic functions $\psi$ in \textbf{d}) for the hyperbolic
characteristic functions. The main result shows how to interpret
these distributions in terms of squared Bessel bridges (Corollary
2) and squaredd Bessel processes (Corollary 3).

\textbf{2. Selfdecomposability of the hyperbolic characteristic
functions.} For this presentation the most crucial example of $SD$
r.v. is that of the \emph{Laplace (or double exponential) random
variable} $\eta$. So, $\eta$ has the probability density
$\frac{1}{2}e^{-|x|}, x \in \Rset$, and its characteristic
function is equal
\begin{multline}
\phi_{\eta}(t)= \frac{1}{1+t^2}= \exp \Big[\int_{-
\infty}^{\infty}(e^{itx}-1)
\frac{e^{-|x|}}{|x|}\,dx \Big] \\
= \exp \int_{0}^{t} \Big[ \int_{-
\infty}^{\infty}(e^{ivx}-1)e^{-|x|}dx \Big] \frac{dv}{v} \in SD.
\quad \quad \quad \quad \quad
\end{multline}
To see its selfdecomposability property simply note that
\[
\frac{\phi_{\eta}(t)}{\phi_{\eta}(ct)} = \frac{1+c^2t^2}{1+t^2} =
c^2 1 + (1-c^2)\frac{1}{1+t^2} \quad \mbox{is the characteristic
function of}\, \rho_{c},
\]
in the formula (1). The rest follows from appropriate
integrations; cf. Jurek (1996).

Another, more "stochastic" argument for selfdecomposability of
Laplace rv $\eta$, as a counterpart to the above analytic one, is
as follows.

Firstly, notice that for three independent rvs $\mathcal{E}(1)$,
$\Tilde{\mathcal{E}}(1)$ and $b_c$, where the first two have
exponential distribution with parameter 1 and the third one has
Bernoulli distribution ($P(b_c=1)= 1-c$ and $P(b_c=0)=c$), one has
equality
\[
\mathcal{E}(1)\stackrel{d}{=} c \mathcal{E}(1)+ b_c
\Tilde{\mathcal{E}}(1),
\]
which means that $\mathcal{E}(1)$ is a selfdecomposable rv. (The
above distributional equality is easily checked by using the
Laplace or Fourier transform.)

Secondly, taking two independent Brownian motions $B_t, \,
\Tilde{B}_t, \, t\ge 0$ and independently of them an exponential
rv $\mathcal{E}(1)$ satisfying the above decomposition, we infer
that
\[
B_{\mathcal{E}(1)} \stackrel{d}{=} \sqrt{c} B_{\mathcal{E}(1)} +
\Tilde{B}_{b_c \Tilde{\mathcal{E}}(1)},
\]
and thus proving that stopped Brownian motion $B_{\mathcal{E}(1)}$
is selfdecomposable as well.

Thirdly, let us note that $B_{\mathcal{E}(1)}$ has the double
exponential distribution. More explicitly we have
\[
\mathbb{E}[e^{it(\sqrt{2}B_{\mathcal{E}(1)})}] =
\mathbb{E}[e^{-t^2 \mathcal{E}(1)}] = \frac{1}{1+t^2}= \phi_{\eta}(t).
\]
For more details and a generalization of this approach cf. Jurek
(2001), Proposition 1 and Bondesson (1992), p. 19.

\begin{prop}
The following three hyperbolic functions: $\frac{1}{\cosh t},
\quad \frac{t}{\sinh t}$, $\frac{\tanh t}{t}, t \in \Rset$, are
characteristic functions of selfdecomposable probability
distributions, i.e., they are in the class SD.
\end{prop}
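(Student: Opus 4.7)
\medskip

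\textbf{Proof plan.} My plan is to realise each of the three candidate functions through an explicit L\'evy--Khintchine representation, with the Laplace example (2) as the fundamental building block, and then to invoke the criterion of property \textbf{(e)}: $\phi\in SD$ iff the L\'evy density $h(x)$ satisfies $xh(x)$ nonincreasing on each half-line.

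For $1/\cosh t$ and $t/\sinh t$ the argument is very direct via the classical Weierstrass products
\[
\cosh t = \prod_{n=0}^{\infty}\bigl(1+(2t/((2n+1)\pi))^{2}\bigr),\qquad \frac{\sinh t}{t} = \prod_{n=1}^{\infty}\bigl(1+(t/(n\pi))^{2}\bigr).
\]
Each factor of the reciprocal is a rescaled Laplace characteristic function $\phi_{\eta}(at)=1/(1+(at)^{2})$, which lies in $SD$ by (2) and property \textbf{(b)}. Property \textbf{(a)} says that $SD$ is closed under convolution (i.e.\ under multiplication of characteristic functions) and under weak convergence, so the infinite products $1/\cosh t$ and $t/\sinh t$ themselves belong to $SD$.

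For $\tanh t/t$ the product route breaks down: although $\tanh t/t = (\sinh t/t)/\cosh t$, the numerator $\sinh t/t$ is unbounded and hence not itself a characteristic function, so one cannot just factor. Instead I would pass to the L\'evy--Khintchine form. Writing $\log(\tanh t/t) = \log(1/\cosh t)-\log(t/\sinh t)$ and plugging the Laplace integral representation $-\log(1+(at)^{2}) = \int_{\Rset}(e^{itx}-1)(e^{-|x|/a}/|x|)\,dx$ from (2) into each Weierstrass factor, the geometric sums over $n$ collapse to the explicit L\'evy densities
\[
h_{1}(x) = \frac{1}{2|x|\sinh(\pi|x|/2)},\qquad h_{2}(x) = \frac{1}{|x|(e^{\pi|x|}-1)}
\]
for $1/\cosh t$ and $t/\sinh t$ respectively. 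Using the identity $e^{\pi|x|}-1 = 2e^{\pi|x|/2}\sinh(\pi|x|/2)$, the difference $h_{3}:=h_{1}-h_{2}$ simplifies to $1/[|x|(e^{\pi|x|/2}+1)]$, which is strictly positive; so $\log(\tanh t/t) = \int_{\Rset}(e^{itx}-1)h_{3}(x)\,dx$ is a bona fide L\'evy--Khintchine representation, and $\tanh t/t$ is infinitely divisible. Moreover $|x|h_{3}(x) = 1/(e^{\pi|x|/2}+1)$ is nonincreasing in $|x|$, so $\tanh t/t\in SD$ by \textbf{(e)}.

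The only nontrivial step is verifying that the formal difference $h_{1}-h_{2}$ is nonnegative; without this, subtracting two L\'evy measures would not correspond to an actual characteristic function. The simple hyperbolic identity above both proves nonnegativity and produces a clean closed form for $h_{3}$, after which the monotonicity of $|x|h_{3}(x)$, and hence the $SD$ property, is immediate.
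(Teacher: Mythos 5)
Your proof is correct, and while the treatment of $1/\cosh t$ and $t/\sinh t$ coincides with the paper's (each Weierstrass factor is a rescaled Laplace characteristic function, hence in $SD$ by (2) and \textbf{(b)}, and \textbf{(a)} passes this to the infinite product), your handling of $\tanh t/t$ is genuinely different and more self-contained. The paper writes $\tanh t/t=\prod_{k}\frac{1+(k\pi)^{-2}t^{2}}{1+((k-\frac{1}{2})\pi)^{-2}t^{2}}$, notes each ratio is a two-point mixture to conclude only that it is a characteristic function, and then outsources the selfdecomposability itself to Yor (1997), p.\ 133, or Jurek (2001). You instead sum the geometric series arising from the Laplace representation to obtain the explicit L\'evy densities $h_{1},h_{2}$ (these are exactly the paper's $M_{\hat C}$ and $M_{\hat S}$ in (5)), verify $h_{3}=h_{1}-h_{2}=\frac{1}{|x|(e^{\pi|x|/2}+1)}\ge 0$ (which agrees with the paper's $\frac{1}{2|x|}(1-\tanh(\pi|x|/4))$ for $M_{\hat T}$), and apply the monotonicity criterion to $|x|h_{3}(x)=1/(e^{\pi|x|/2}+1)$. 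This buys a direct proof of the third case and derives (5) along the way. Two points of bookkeeping, neither a gap: first, the paper's \textbf{(e)} is stated only as a necessary condition, so you are invoking the classical converse (L\'evy's characterization of class $L$: an infinitely divisible law whose L\'evy measure has a density $h$ with $|x|h(x)$ nonincreasing in $|x|$ on each half-line is selfdecomposable); that converse is standard, e.g.\ Jurek and Mason (1993), Section 3.6, but deserves an explicit citation since \textbf{(e)} as written does not supply it. Second, you should remark that $h_{3}\le h_{1}$ ensures $\int\min(1,x^{2})\,h_{3}(x)\,dx<\infty$, so that once nonnegativity is established the difference of the two exponents really is a L\'evy--Khintchine exponent.
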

\begin{proof}
From the following product representations :
\begin{equation}
\cosh z = \prod_{k=1}^{ \infty}(1+ \frac{4z^2}{(2k-1)^{2}
\pi^{2}}), \quad \sinh z = z \prod_{k=1}^{ \infty}(1+
\frac{z^2}{k^2 \pi^{2}}),
\end{equation}
for all complex z, and from (2) with \textbf{(a)} we conclude that
the first two hyperbolic functions are characteristic functions
from SD. Moreover, these are characteristic functions of series of
independent Laplace r.v.; cf. Jurek (1996).

Note that for $0<a<b$ the fraction
\[
\frac{1+a^2t^2}{1+b^2t^2} = \frac{a^2}{b^2} 1 + (1-
\frac{a^2}{b^2}) \frac{1}{1+b^2t^2} \quad \mbox{is a
characteristic function,}
\]
\[
\mbox{and so is} \quad \frac{\tanh t}{t} =
\prod_{k=1}^{\infty}\frac{1+(k\pi)^{-2}t^2}{1+((k-\frac{1}{2})\pi)^{-2}t^2},
\]
as a converging infinite series of characteristic functions of the
above form. Its selfdecomposbility follows from Yor (1997), p.
133, or Jurek (2001), Example 1(b).
\end{proof}

\begin{rem}
The selfdecomposability of $\frac{\tan t}{t}$, i.e., the formula
(1), would follow in an elementary manner if for all $0<c<1$ and
all $0<w<u$, the functions
\[
\frac{1+(c^{2}u +w)t^2 +c^2uw t^4}{1+(u+c^2w)t^2 +c^2uw t^4}=
\frac{1+wt^2}{1+ ut^2}  \frac{1+ c^2ut^2}{1+c^2wt^2} \quad
\mbox{were characteristic functions. }
\]
[The above is a ratio of \emph{two} fractions of the form as in
the product representation of $\tanh t /t$, with the fraction in
the denominator computed at $ct$.] However, they \emph{can not be}
characteristic functions ! Affirmative answer would mean that
Laplace rv is in $L_1$ (these are those $SD$ rv for which  $BDLP$
$Y(1)$ in \textbf{(c)}, is $SD$). Equivalently, the characteristic
function $\rho_{c}$ in (1) is in $SD$).) But from (2) we see that
$Y(1)$ for  Laplace rv $\eta$ has compound Poisson distribution
with L\'evy spectral measure $dM(x) = e^{-|x|}dx$ which does not
satisfy the criterium \textbf{(e)}. Cf. also Jurek ( 1997).
\end{rem}

\medskip
\textbf{3. The BDLP's of the hyperbolic characteristic functions.}
Since the three hyperbolic characteristic functions are infinitely
divisible one can insert them into L\'evy processes:
$\hat{C_{s}}$, $\hat{S_{s}}$, $\hat{T_{s}}$, for $s \ge 0$,
corresponding to \emph{cosh, sinh} and \emph{tanh} characteristic
functions. Those processes were studied from the ID class point of
view in the recent paper Pitman-Yor (2003). Here we are looking at
them from the SD class point of view, i.e., via the corresponding
BDLP's.

Below $\phi$ with subscript $\hat{C}, \hat{S}$ or $\hat{T}$
denotes one of the three hyperbolic characteristic functions, $M$
with similar subscripts denotes the L\'evy spectral measure in the
appropriate L\'evy-Khintchine formula, furthermore $\psi$  with
the above subscripts is the corresponding characteristic function
in the random integral representation (properties \textbf{(c)} and
\textbf{(d)} of class SD) and finally $N$ with one of the above
subscripts is the L\'evy spectral measure of $\psi$ (as in
\textbf{(e)}). Thus we have the equalities :
\begin{equation}
\phi_{\hat{C}}(t)=\phi_{\hat{S}}(t) \cdot \phi_{\hat{T}}(t), \quad
\mbox{i.e.,} \quad
\frac{1}{\cosh t}= \frac{t}{\sinh t} \cdot \frac{\tanh t}{t},
\end{equation}

\begin{multline}
M_{\hat{C}}(\cdot) =M_{\hat{S}}(\cdot) + M_{\hat{T}}(\cdot), \quad
\mbox{where} \quad
\frac{dM_{\hat{C}}(x)}{dx} = \frac{1}{2x\sinh (\pi x/2)}; \\
\frac{dM_{\hat{S}}(x)}{dx} = \frac{e^{-\pi|x|/2}}{2x\sinh (\pi x/2} =
\frac{1}{2|x|} (\coth (\frac{\pi|x|}{2}) -1); \\
\frac{dM_{\hat{T}}(x)}{dx} = \frac{1}{2|x|}
\frac{e^{-\pi|x|/4}}{\cosh(\pi|x|/4)}= \frac{1}{2|x|}[1- \tanh(\pi|x|/4)].
\end{multline}
These are consequences of the appropriate L\'evy-Khintchine
formulas for the hyperbolic characteristic functions or see Jurek
(1996) or Pitman-Yor (2003) or use (3) and the product formulas
for $\cosh z, \sinh z$; (for $\tanh z$ use the ratio of the two
previous formulas).

\begin{cor}
For the $SD$ hyperbolic characteristic functions $\phi_{\hat{C}}$,
$\phi_{\hat{S}}$ and $\phi_{\hat{T}}$, their background driving
characteristic functions are $\psi_{\hat{C}}$, $\psi_{\hat{S}}$
and $\psi_{\hat{T}}$, where
\begin{multline}
\psi_{\hat{C}}(t)=\psi_{\hat{S}}(t) \cdot \psi_{\hat{T}}(t); \quad
\psi_{\hat{C}}(t)=\exp[-t \tanh t], \,\,
\psi_{\hat{S}}(t)=\exp[1-t \coth t], \\
\psi_{\hat{T}}(t)=\exp\Big[\frac{1}{\cosh t}\cdot\frac{t}{\sinh
t}- 1 \Big] = \exp\Big[\frac{2t}{\sinh(2t)} - 1 \Big]. \quad \quad
\quad \quad
\end{multline}
Probability distributions corresponding to $\psi_{\hat{C}}$,
$\psi_{\hat{S}}$ and $\psi_{\hat{T}}$ are infinitely divisible
with finite logarithmic moments.
\end{cor}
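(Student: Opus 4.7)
The plan is to apply property \textbf{(d)} of the class $SD$ directly: for each of the three hyperbolic characteristic functions $\phi$, the corresponding BDLP characteristic function is $\psi(t)=\exp[t(\log\phi(t))']$. So the bulk of the work is three short logarithmic-derivative computations.

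First I would handle $\phi_{\hat{C}}(t)=1/\cosh t$. Then $(\log \phi_{\hat{C}})'(t)=-\tanh t$, yielding $\psi_{\hat{C}}(t)=\exp[-t\tanh t]$. Next, for $\phi_{\hat{S}}(t)=t/\sinh t$, differentiating $\log t-\log\sinh t$ gives $1/t-\coth t$, whence $\psi_{\hat{S}}(t)=\exp[1-t\coth t]$. Finally, for $\phi_{\hat{T}}(t)=\tanh t/t=\sinh t/(t\cosh t)$, the logarithmic derivative is $\coth t-\tanh t-1/t$, which gives $\psi_{\hat{T}}(t)=\exp[\,t\coth t-t\tanh t-1\,]$. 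To match the form stated in (6) I would use the elementary identity $\coth t-\tanh t=(\cosh^2 t-\sinh^2 t)/(\sinh t\cosh t)=2/\sinh(2t)$, so $t(\coth t-\tanh t)=2t/\sinh(2t)=(1/\cosh t)\cdot(t/\sinh t)$, which matches both alternative expressions in (6).

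The multiplicative relation $\psi_{\hat{C}}=\psi_{\hat{S}}\cdot\psi_{\hat{T}}$ then drops out for free: from (4) one has $\log\phi_{\hat{C}}=\log\phi_{\hat{S}}+\log\phi_{\hat{T}}$, so the operator $\phi\mapsto\exp[t(\log\phi)']$ turns the product into a product. (As a sanity check one verifies $-t\tanh t=(1-t\coth t)+(t\coth t-t\tanh t-1)$, which is immediate.)

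For the final assertion, by Proposition 1 each of the three hyperbolic functions is in $SD$, so property \textbf{(c)} guarantees that the law of the associated $Y(1)$, whose characteristic function we have just identified as $\psi_{\hat{C}}$, $\psi_{\hat{S}}$, $\psi_{\hat{T}}$, is infinitely divisible with $\mathbb{E}[\log(1+|Y(1)|)]<\infty$; equivalently, these $\psi$'s belong to $ID_{\log}$. No serious obstacle is expected: the only step requiring a tiny bit of algebra is the $\coth t-\tanh t=2/\sinh(2t)$ simplification used to unify the two displayed forms of $\psi_{\hat{T}}$, and the behaviour at $t=0$ is harmless since each expression has a removable singularity with $\psi(0)=1$ in agreement with the convention in \textbf{(d)}.
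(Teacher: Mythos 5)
Your proposal is correct and follows exactly the paper's own (one-line) argument: the paper proves the corollary by invoking the factorization (4) together with properties \textbf{(c)} and \textbf{(d)}, which is precisely the logarithmic-derivative computation and $ID_{\log}$ conclusion you carry out in detail.
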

Proofs follow from (4) and  the properties \textbf{(c)} and
\textbf{(d)} of the  selfdecomposable distributions.

Let us note that  $\psi_{\hat{T}}$ is the characteristic function
of the compound Poisson distribution with summand being the sum of
independent rv's with the $cosh$ and $sinh$ characteristic
functions.

Finally, on the level of the L\'evy measures $N$ of $Y(1)$, from
the $BDLP's$ in the property \textbf{(d)}, we have the following :
\begin{multline}
N_{\hat{C}}(\cdot)= N_{\hat{S}}(\cdot) + N_{\hat{T}}(\cdot), \quad
\mbox{where} \quad
\frac{dN_{\hat{C}}(x)}{dx} = \frac{\pi}{4}
\frac{\cosh(\frac{\pi x}{2})}{\sinh^{2}(\frac{\pi x}{2})}, \\
\frac{dN_{\hat{S}}(x)}{dx}= \frac{\pi}{4}\frac{1}{\sinh^{2}(\frac{\pi x}{2})};
\quad
\frac{dN_{\hat{T}}(x)}{dx}=\frac{\pi}{8}\frac{1}{\cosh^{2}(\frac{\pi x}{4})}.
\end{multline}
Explicitly, as in (4), on the level of the BDLP one has
factorization
\begin{equation}
\exp[-t \tanh t]= \exp[1-t \coth t] \cdot
\exp \Big[ \frac{t}{\cosh t \sinh t}-1 \Big].
\end{equation}

Taking into account all the above and the property \textbf{(d)} we
arrive at the identities :
\begin{equation}
\int_{R \backslash \{0\}}(1-\cos tx)\,\frac{\pi}{4}
\frac{\cosh(\pi x/2)}{\sinh^{2}(\pi x/2)}\,dx= t \tanh t ;
\end{equation}
\begin{equation}
\int_{R \backslash \{0\}}(1- \cos tx)\,\frac{\pi}{4}
\frac{1}{\sinh^{2}(\pi x/2)}\,dx=t \coth t -1 ;
\end{equation}
\begin{equation}
\int_{R \backslash \{0\}}(1- \cos tx)\,\frac{\pi}{8}
\frac{1}{\cosh^{2}(\pi x/4)}\,dx= 1- \frac{2t}{\sinh 2t}.
\end{equation}
Furthermore, since $\psi_{\hat{T}}$ corresponds to a compound
Poisson distribution, the last equality implies that
\begin{equation}
\int_{R \backslash \{0\}}\cos tx\, [\frac{\pi}{8}
\frac{1}{\cosh^{2}(\pi x/4)}]\,dx= \frac{2t}{\sinh 2t},
\end{equation}
where we recover the known relation between $(\cosh u)^{-2}$ being
the probability density corresponding to the characteristic
function $\frac{at}{\sinh at}$ and vice versa by the inversion
formula; cf. P. L\'evy (1950) or Pitman and Yor (2003), Table 6.

\medskip

\textbf{4. Stochastic interpretation of BDLP's for hyperbolic
functions.} The functions $\psi_{\hat{C}}(t)$ and
$\psi_{\hat{S}}(t)$ were identified as
 characteristic functions of the \emph{background driving random variable} $Y(1)$ (in short:
BDRV) for $cosh$ and $sinh$ SD rv in Jurek (1996), p. 182. [By the
way, the question raised there has an affirmative answer. More
precisely: (23) implies (24). To see that note that $D_1
\stackrel{d}{=} \frac{1}{2} \tilde{D}_{1}+C$, where $\tilde{D}_1$
is a  copy of $D_1$ and independent of $C$ . The notations here
are from the paper in question.]

More recently, in Jurek (2001) it was noticed that the conditional
characteristic function of the L\'evy's stochastic area integral
is a product of the $sinh$ characteristic function and its BDLP
$\psi_{\hat{S}}$. Similar factorization one has in Wenocur
formula; Wenocur (1986). Cf. also Yor (1992a), p. 19.

In this section we give some \emph{"stochastic"} interpretation of
the characteristic functions $\psi_{\hat{C}}(t)$ and
$\psi_{\hat{S}}(t)$ in terms of Bessel processes.

Let us recall here some basic facts and notations from Pitman and
Yor (1982) and Yor (1992a, 1997). Also cf. Revuz and Yor (1999).

For $\delta$-dimensional Brownian motion $(B_t , t \ge 0)$,
starting from  a vector $a$, we define the process $X_t = |B_t|^2,
t \ge 0$, which in turn defines the probability distribution (law)
$Q^{\delta}_{x},$\,$ x:=|a|^2$, on the canonical space $\Omega:=
C([0, \infty);[0,\infty))$ of non-negative functions defined on
the half-line $[0,\infty)$, equipped with the $\sigma$-field $\F$
such that mappings $\{\omega \to X_{s}(\omega)\}$ are measurable.
In fact, $(X_t, t\ge0)$ is the unique  strong solution of a
stochastic integral equation
\[
X_t=x+2\int_0^t \sqrt{X_s}\,d\beta_s+\delta \, t, \ \ t\ge 0,
\]
where $(\beta_t, t \ge 0)$ is 1-dimensional Brownian motion.

The laws $Q^{\delta}_{x}$ satisfy the following convolution
equation due to Shiga-Watanabe:
\begin {equation}
Q^{\delta}_{x} \star Q^{\delta^{'}}_{x^{'}} =
Q^{\delta + \delta^{'}}_{x+x^{'}} \quad \mbox{for all} \quad \delta,
\delta^{'}, x , x^{'} \ge 0,
\end{equation}
where, for $P$ and $Q$ two probabilities on $(\Omega,\F)$, $P
\star Q$ denotes the distribution of $(X_t +Y_t, t \ge 0)$, with
$(X_t, t \ge 0)$ and $(Y_t, t \ge 0)$ two independent processes,
respectively $P$ and $Q$ distributed; cf. Revuz and Yor (1999),
Chapter XI, Theorem 1.2.

Similarly, let $Q^{\delta}_{x \to y}$ be $\delta$-dimensional
squared Bessel bridge of $(X_s, 0 \le s \le 1)$, given $X_1 =y$,
viewed as a probability on $C([0,1], [0,\infty))$.

Below we use integrals of functionals $F$ with respect to measures
$Q$ over function spaces. To simplify our notation, as in Revuz
and Yor (1999), we use $Q(F)$ to denote such integrals. From Yor
(1992a, Chapter 2), the L\'evy's stochastic area formula is given
in the form
\begin{equation}
Q^{\delta}_{x \to 0}
\Big[\exp(-\frac{\lambda^2}{2}\int_{0}^{1}dsX_s)\Big]=\Big(
\frac{\lambda}{\sinh \lambda}\Big)^{\delta/2}\exp
\Big(-\frac{x}{2}(\lambda \coth \lambda -1)\Big)
\end{equation}
However, since $Q^{\delta}_{x \to 0}=Q^{\delta}_{0 \to 0} \star
Q^{0}_{x \to 0}$, (cf. Yor (1992), Pitman and Yor (1982)) we have
in fact that
\[
Q^{0}_{x \to 0}\Big(\exp(-\frac{\lambda^{2}}{2}\int_{0}^{1} ds
X_s)\Big) = \exp\Big(- \frac{x}{2}\,(\lambda \coth \lambda
-1)\Big)
\]
Thus we may conclude the following
\begin{cor}
The $BDLP$ $Y$, for the $SD$ characteristic function
$\phi_{\hat{C}}(t)=\frac{t}{\sinh t}$, is such that $Y(1)$ has the
characteristic function
\begin{equation}
\psi_{\hat{S}}(t) = \exp(1-t \coth t) = Q^{0}_{2 \to
0}\Big(\exp(it \gamma_{(\int_{0}^{1}dsX_s)}) \Big)\in ID_{\log},
\end{equation}
where $(\gamma_s, s \ge 0)$ is a Brownian motion independent of
the Bessel squared process X.

[Here it may be necessary to enlarge the probability space to
support independent $\gamma$ and $X$.]
\end{cor}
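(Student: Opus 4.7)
The plan is to combine the reduced L\'evy area identity established just above the corollary with a conditional Gaussian computation. Recall the displayed identity
\[
Q^{0}_{x \to 0}\Bigl(\exp\bigl(-\tfrac{\lambda^{2}}{2}\int_{0}^{1} X_s\,ds\bigr)\Bigr) = \exp\Bigl(-\tfrac{x}{2}(\lambda \coth \lambda - 1)\Bigr),
\]
and observe that specializing $x=2$ and $\lambda=t$ already produces $\exp(1 - t\coth t) = \psi_{\hat{S}}(t)$, which by Corollary 1 is the characteristic function of the BDRV for the $SD$ law with characteristic function $t/\sinh t$. So the content of Corollary 3 is really the identification of this Laplace-type expression with the Fourier expression on the right-hand side.

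The main step is to pass from the Laplace side to the Fourier side via conditioning. On an enlarged probability space supporting both the squared Bessel bridge $X$ under $Q^{0}_{2\to 0}$ and an independent Brownian motion $\gamma$, set $A := \int_{0}^{1} X_s\,ds$. Conditionally on $X$, the quantity $A$ is a non-negative constant, so $\gamma_A$ is a centered Gaussian with variance $A$, and hence its conditional characteristic function equals $\exp(-\tfrac{t^2}{2}A)$. Taking the expectation over $X$ and using the above identity yields
\[
Q^{0}_{2\to 0}\bigl(\exp(it\gamma_A)\bigr) = Q^{0}_{2\to 0}\Bigl(\exp\bigl(-\tfrac{t^2}{2}\int_{0}^{1} X_s\,ds\bigr)\Bigr) = \exp(1 - t\coth t),
\]
which is the claimed equality.

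The assertion that $\psi_{\hat{S}} \in ID_{\log}$ is not a separate computation: by property \textbf{(c)} every BDRV $Y(1)$ is infinitely divisible and satisfies $\mathbb{E}[\log(1+|Y(1)|)] < \infty$, so this membership is automatic once $\psi_{\hat{S}}$ has been recognized as the characteristic function of a BDRV. The only delicate point, and it is largely bookkeeping rather than a genuine obstacle, is that $\gamma$ and $X$ need not a priori live on the same underlying space; hence the enlargement of the probability space that the author flags in brackets. After that enlargement, the interchange of the Gaussian conditional expectation with the integration against $Q^{0}_{2\to 0}$ is a one-line Fubini argument, and the rest of the proof is just matching notation.
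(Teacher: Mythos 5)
Your proposal is correct and follows essentially the same route as the paper: specialize the reduced squared-Bessel-bridge identity $Q^{0}_{x\to 0}\bigl(\exp(-\tfrac{\lambda^{2}}{2}\int_{0}^{1}X_s\,ds)\bigr)=\exp\bigl(-\tfrac{x}{2}(\lambda\coth\lambda-1)\bigr)$ at $x=2$, $\lambda=t$, and convert the Laplace expression into the Fourier one by subordinating an independent Brownian motion $\gamma$ at the random time $\int_{0}^{1}X_s\,ds$. You merely make explicit the conditioning/Fubini step and the appeal to property \textbf{(c)} for the $ID_{\log}$ membership, both of which the paper leaves implicit in its one-line ``thus we may conclude.''
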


In a similar way, in view of Yor (1992a), Chapter 2, we have
\[
Q^{\delta}_{x} \Big(\exp(- \frac{\lambda^2}{2} \int_{0}^{1} dsX_s)
\Big) = \Big(\frac{1}{\cosh \lambda}\Big)^{\delta /2}\,\exp
(-\frac{x}{2}\, \lambda \, \tanh \lambda ),
\]
so, in particular,
\[
Q^{0}_{x} \Big(\exp(- \frac{\lambda^2}{2} \int_{0}^{1} dsX_s)
\Big) = \exp \Big(-\frac{x}{2} \lambda \tanh \lambda \Big).
\]
Thus, as above, we conclude the following:

\begin{cor}
The $BDLP$ $Y$, for the $SD$ characteristic function
$\frac{1}{\cosh t}$, is such that $Y(1)$ has characteristic
function
\begin{equation}
\psi_{\hat{C}}(t) = \exp (-t \tanh t)= Q^{0}_{2} \Big(\exp (it
\gamma_{(\int_{0}^{1}dsX_s)} ) \Big) \in ID_{\log},
\end{equation}
where a process $(\gamma_s, s \ge 0)$ is a Brownian motion
independent of the Bessel squared process $X$.
\end{cor}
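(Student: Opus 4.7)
The plan is to derive the claimed probabilistic representation by conditioning on the squared Bessel process and reducing to the Laplace-transform identity stated just above the corollary, which is quoted from Yor (1992a). The membership $\psi_{\hat{C}} \in ID_{\log}$ is already contained in Corollary 1, so only the identification of $\psi_{\hat{C}}(t)$ with the $Q^0_2$-integral has to be argued.

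First I would condition the integrand on the $\sigma$-field generated by the path $(X_s, 0 \le s \le 1)$. Because $\gamma$ is a Brownian motion independent of $X$, for any nonnegative random variable $U = \int_0^1 X_s\,ds$ that is $\sigma(X)$-measurable the time-changed variable $\gamma_U$ is, conditionally on $X$, centered Gaussian with variance $U$. Hence
\[
E\bigl[\exp(it\gamma_U)\,\big|\,X\bigr] = \exp\Bigl(-\tfrac{t^2}{2}\,U\Bigr)
= \exp\Bigl(-\tfrac{t^2}{2}\int_0^1 X_s\,ds\Bigr).
\]
Taking expectations under $Q^0_2$ therefore yields
\[
Q^0_2\Bigl(\exp(it\gamma_{\int_0^1 ds X_s})\Bigr) = Q^0_2\Bigl(\exp\bigl(-\tfrac{t^2}{2}\textstyle\int_0^1 X_s\,ds\bigr)\Bigr).
\]

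Second, I would invoke the Laplace-transform formula recalled just before the corollary statement, namely
\[
Q^{0}_{x}\Bigl(\exp(-\tfrac{\lambda^2}{2}\textstyle\int_0^1 X_s\,ds)\Bigr) = \exp\bigl(-\tfrac{x}{2}\,\lambda\tanh\lambda\bigr),
\]
specialized to $x=2$ and $\lambda = t$ (for $t \ge 0$; the case $t<0$ follows because the characteristic function of a symmetric law is even, or equally by analytic continuation). This gives exactly $\exp(-t\tanh t)$, which by Corollary 1 is $\psi_{\hat{C}}(t)$. The $ID_{\log}$ statement is then inherited verbatim from Corollary 1.

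There is essentially no serious obstacle: the whole content is the conditioning step plus the Yor identity. The only thing to be careful about is the measure-theoretic point flagged in the bracketed remark of Corollary 2, namely that one may need to enlarge the canonical squared-Bessel probability space so as to carry an independent Brownian motion $\gamma$; once this is done, Fubini applied to $E[E[\cdot\mid X]]$ justifies the interchange of the $Q^0_2$-expectation and the conditional Gaussian expectation, and the identification is complete.
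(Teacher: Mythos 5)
Your proposal is correct and follows essentially the same route as the paper: the paper obtains the corollary by specializing the Yor (1992a) Laplace-transform formula $Q^{0}_{x}(\exp(-\frac{\lambda^2}{2}\int_0^1 ds\,X_s)) = \exp(-\frac{x}{2}\lambda\tanh\lambda)$ to $x=2$, $\lambda=t$, and converting characteristic function to Laplace transform via the independent Brownian subordination $\gamma_{\int_0^1 ds\,X_s}$, exactly as you do. You merely make explicit the conditioning-on-$X$ step that the paper compresses into ``Thus, as above, we conclude.''
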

\medskip
Let us return again to functions $\psi_{\hat{C}}(t)$ and
$\psi_{\hat{S}}(t)$, given in (6), but viewed this time as
Laplace transforms in $t^{2}/2$. From Yor (1997), p. 132, we have
\begin{equation}
\frac{1}{\cosh t}= \mathbb{E}\Big[\exp(- \frac{t^2}{2}\,T^{(1)}_1)
\Big], \quad \ \ \frac{t}{\sinh t}= \mathbb{E}\Big[\exp(-
\frac{t^2}{2}\,T^{(3)}_1 ) \Big],
\end{equation}
where $T^{(\delta)}_{1}:= \inf \{t: \mathcal{R}^{(\delta)}_t = 1
\}$ denotes the hitting time of 1 by $\delta$-dimensional Bessel
process $\mathcal{R}^{(\delta)}_t, \, t \ge0,$ starting from zero.
Jeanblanc-Pitman-Yor (2002), Theorem 3, found that the
corresponding BDLP's $Y$ are of the form
\begin{equation}
Y(h)= \int_{0}^{\tau_{h}^{(\delta)}}du\,
1_{(\mathcal{R}_{u}^{(\delta)} \le 1)}, \quad \quad h \ge 0,
\end{equation}
where $(\tau_{h}^{r}, h \ge 0)$ is the inverse of the local time
of $\mathcal{R}_{u}^{(\delta)}$ at $r$; cf. Revuz and Yor (1999),
Chapter VI, for all needed notion and definitions. From the above
we also recover the formulae
\begin{multline}
\mathbb{E}\Big[\exp(- \frac{\lambda^{2}}{2}
\int_{0}^{\tau_{1}^{(1)}}du 1_{(|B_u| \le 1)})\Big] =
\exp (- \lambda \tanh \lambda), \\
\mathbb{E}\Big[ \exp(- \frac{\lambda^{2}}{2}
\int_{0}^{\tau_{1}^{(3)}}du 1_{(\mathcal{R}_{u}^{(3)} \le 1)})
\Big]= \exp(- \lambda (\coth \lambda -1)).
\end{multline}
These as well provide another "stochastic view" of the analytic
formulae for the $BDRV$ of two $SD$ hyperbolic characteristic
functions in (4), i.e., $1/\cosh t$ and $t/\sinh t$.
\medskip

\begin{center}
\textbf{REFERENCES}
\end{center}

\noindent L. Bondesson (1992), \emph{Generalized gamma
convolutions and related classes of distributions and densities}.
Lect. Notes in Statist., vol. 76, Springer-Verlag, New York.

\medskip
\noindent M. Jeanblanc, J. Pitman and M. Yor (2002), Self-similar
processes with independent increments associated with L\'evy and
Bessel processes.\ \emph{Stoch. Proc. Appl.} vol.100, pp. 223-232.

\medskip
\noindent Z. J. Jurek (1996), Series of independent exponential
random variables. In: \emph{Proc. $7^{th}$ Japan-Russia Symposium
on Probab. Ther. Math. Stat.}; S. Watanabe, M. Fukushima, Yu.V.
Prohorov, and A.N. Shiryaev Eds, pp. 174-182. World Scientific,
Singapore, New Jersey.

\medskip
\noindent Z. J. Jurek (1997), Selfdecomposability: an exception or
a rule ? \emph{Annales Univer. M. Curie-Sk\l odowska,
Lublin-Polonia}, vol. LI, Sectio A, pp. 93-107. (A Special volume
dedicated to Professor Dominik Szynal.)

\medskip
\noindent Z. J. Jurek (2001), Remarks on the selfdecomposability
and new examples, \  \emph{Demonstratio Math.} vol. XXXIV(2), pp.
241-250. (A special volume dedicated to Professor Kazimierz
Urbanik.)

\medskip
\noindent Z. J. Jurek and J. D. Mason (1993), \emph{Operator limit
distributions in probability theory}, J. Wiley and Sons, New York.

\medskip
\noindent P. L\'evy (1951), Wiener's random functions, and other
Laplacian random functions; \emph{Proc. 2nd Berkeley Symposium on
Math. Stat. Probab.}, Univ. California Press, Berkeley, pp.
171-178.

\medskip
\noindent M. Lo\'eve (1963), \emph{Probability theory}, D. van
Nostrand Co., Princeton, New Jersey.

\medskip
\noindent J. Pitman and M. Yor (1981), Bessel processes and
infinite divisible laws. In: \emph{Stochastic Integrals; Proc. LMS
Durham Symposium 1980}. Lect. Notes in Math. vol. 851, pp.
285-370.

\medskip
\noindent
J. Pitman and M. Yor (1982), A decomposition of Bessel bridges, \emph{Z.
Wahrscheinlichkeistheorie verw. Gebiete}, vol. 59, pp. 425-457.

\medskip
\noindent J. Pitman and M. Yor (2003a), Infinitely divisible laws
associated with hyperbolic functions, \emph{Canadian J. Math.},
vol.55 (2), pp. 292-330.

\medskip
\noindent J. Pitman and M. Yor (2003b), Hitting, occupation and
local times of one-dimensional diffusions: martingale and
excursion approaches, \emph{Bernoulli}, vol. 9 no. 1, pp. 1-24.

\medskip
\noindent D. Revuz and M. Yor (1999), \emph{Continuous Martingales
and Brownian Motion}, Springer-Verlag, Berlin-Heildelberg, 3rd
edition.

\medskip
\noindent M. Wenocur (1986), Brownian motion with quadratic
killing and some implications. \emph{J. Appl. Probab.} 23, pp.
893-903.

\medskip
\noindent M. Yor (1992), Sur certaines fonctionnelles
exponentielles du mouvement Brownien reel. \emph{J. Appl. Prob.
29}, pp. 202-208.

\medskip
\noindent M. Yor (1992a), \emph{Some aspects of Brownian motion},
Part I : Some special functionals. Birkhauser, Basel.

\medskip
\noindent M. Yor (1997), \emph{Some aspects of Brownian motion},
Part II : Some recent martingale problems. Birkhauser, Basel.

\medskip
\medskip
\noindent
Institute of Mathematics\hfill Laboratoire de Probabilit\'es \\
University of Wroc\l aw\hfill Universit\'e Pierre et Marie Curie \\
Pl.Grunwaldzki 2/4\hfill 175, rue du Chevaleret, \\
50-384 Wroclaw, Poland\hfill 75013 Paris, France

\end{document}